\newtheorem{thm}{Theorem}[section]
\newtheorem{cor}[thm]{Corollary}
\newtheorem{prop}[thm]{Proposition}
\theoremstyle{definition}
\newtheorem{defn}[thm]{Definition}
\theoremstyle{remark}
\newtheorem{rem}[thm]{Remark}
\numberwithin{equation}{section}
\newcommand{\Hom}{\mathrm{Hom}}
\newcommand{\h}{\mathrm{H}}
\newcommand{\pr}{\mathbf{proj}\mathcal{A}_\theta}
\newcommand{\vc}{\mathbf{Vect}(\mathbb{T}_\theta)}
\begin{document}

\title{Motives of noncommutative tori}
\author{Yunyi Shen}
\address{Mathematics Department, Florida State University, 1017 Academic Way, Tallahas-
see, FL 32306-4510, USA}
\email{yshen2@math.fsu.edu}

%\thanks{}%
%\subjclass{}%
\keywords{noncommutative torus, noncommutative motives, holomorphic bundles, Tannakian categories}

%\date{}%
%\dedicatory{}%
%\commby{}%
% ----------------------------------------------------------------
\begin{abstract}
In this article, %we are trying to observe
we propose a way of seeing
the noncommutative tori in the category of noncommutative motives. As an algebra, the noncommutative torus is lack the smoothness property required to define a noncomutative motive. Thus, instead of working with the algebra, we work with the category of holomorphic bundles.
It is known that these are related to the coherent sheaves of an elliptic curve. We describe the cyclic homology of the category of holomorphic bundle on a noncommutative torus. We then introduce a notion of (weak) t-structure in dg categories. By applying the t-structure to a noncommutative torus, we show that it induces a decomposition of the motivic Galois group of the Tannakian subcategory generated by the auxiliary elliptic curve.
\end{abstract}
\maketitle
% ----------------------------------------------------------------
\section{Introduction}
In algebraic geometry, the notion of motives was first suggested by Grothendieck, as a possible approach to the Weil conjectures. There is at present, due to the result of Jannsen \cite{J}, a good construction of an abelian category of pure movies for smooth projective varieties. For more general varieties, there are several candidates of triangulated categories having the required properties of the bounded derived category of mixed motives, due to work of Voevodsky, Hanamura, and Levine \cite{V}, \cite{H}, \cite{L}, though in general the existence of mixed motives is still conjectural \cite{MVW}.

At the beginning of the the 21st century, Kontsevich introduced motives to the noncommutative world. He first proposed a  construction of  a category of noncommutative numerical motives in the frame of dg categories \cite{Ko1} and a possible notion of noncomutative Chow motives. Then Tabuada constructed the category of noncommutative Chow motives in \cite{Tab1} and showed how it is related to the classical category of Chow motives in the commutative sense. Recently, a series of papers on the noncommutative motives written by Marcolli and Tabuada \cite{MT1}, \cite{MT2}, \cite{MT3} further developed this theory. Their papers extend some of the classical results about pure motives to the noncommutative framework, such as the semi-simplicity of the category of numerical motives, standard conjectures on pure motives, Artin motives, etc. Several properties of classical motives are reformulated in terms of noncommutative motives.

The noncommutative motives successfully extend pure motives to noncommutative (algebraic) geometry. A natural question is : can we observe the typical noncommutative objects (like noncommutative torus) in this construction? One major problem might be how we can relate these objects to algebraic spaces \cite{Ko1}, because as algebras they are usually not smooth,
or even Noetherian.

However, for noncommutative tori, the situation is much better. Thanks to the remarkable work of Schwartz and Polishchuk \cite{PS}, \cite{P}, we can use holomorphic bundles on the noncommutative torus. They have shown the category of holomorphic bundles is equivalent to the heart of a nonstandard t-structure on some elliptic curve. We can associate a noncommutative motive to a smooth projective variety. So our question becomes how can we describe the nonstandard t-structure in the category of noncommutative motives. In this article, we will introduce a notion of a (weak) t-structure over dg categories. Moreover, we will see the additive invariant will split
into direct sum on the t-structure.

This article will be organized as following: in section 1, we will review some basic concepts in dg categories and introduce the noncommutative motives. In section 2, we will first review the category of holomorphic bundles on a noncommutative torus and then discuss its cyclic homology. In the last section, we will introduce the (weak) t-structure in dg categories and discuss
the t-structure in noncommutative motives. Then we apply the properties given by the t-structure to the noncommutative tori.

Through this article, we fix $k$ as a field.
\section{Preliminaries}
\subsection{Dg categories} For dg categories, our main reference is Keller's article \cite{Ke1}.

A dg (stands for differential graded) category $\mathcal{C}$ is a $k$-category whose morphisms $\mathcal{C}(X,Y)$ are a complex of modules over $k$ and a composition of two morphisms is a morphism of complexes over $k$
$$\mathcal{C}(X,Y)\otimes\mathcal{C}(Y,Z)\rightarrow\mathcal{C}(X,Z).$$
Where $X,\,Y$ and $Z$ are objects of $\mathcal{C}$.

For example, let $A$ be a $k$-algebra, then we can define the dg category $\mathcal{C}_{dg}(A)$ as the following: the objects are complexes of right $A$ modules and for any two complexes $M=(M^m,d_M),\,N=(N^m,d_N)$ we define the $n$-th component $\mathcal{C}_{dg}(A)(M,N)^n$ of the hom sets as morphisms of $A$-modules $f:M^m\rightarrow N^{m+n}$, $\forall m\in\mathbb{Z}$ whose differential is given by $d(f)=d_M\circ f-(-1)^nf\circ d_N$. The composition of two morphisms in $\mathcal{C}_{dg}(A)$ is just the composition of graded maps.

Given a dg category $\mathcal{C}$, we can immediately construct two new categories associated to $\mathcal{C}$, the categories $Z^0(\mathcal{C})$ and $H^0(\mathcal{C})$. They are the categories with the same objects as in $\mathcal{C}$ but with morphisms $Z^0(\mathcal{C})(X,Y)=Z^0(\mathcal{C}(X,Y))$ and $H^0(\mathcal{C})(X,Y)=H^0(\mathcal{C}(X,Y))$ for objects $X$ and $Y$.

A dg functor is a functor between two dg categories and which is a morphism of complexes of modules over $k$ between the morphism sets. All the small dg categories then form the category $\mathrm{dgcat}_k$. From now on, we always assume a dg category is an object in $\mathrm{dgcat}_k$.

For $\mathcal{C},\,\mathcal{C}'\in\mathrm{dgcat}_k$, we say the dg functor $Q:\,\mathcal{C}\rightarrow\mathcal{C}'$ is a quasi-equivalence if $Q(X,Y): \mathcal{C}(X,Y)\rightarrow\mathcal{C}'(Q(X),Q(Y))$ is a quasi-isomorphism on complexes over $k$ and the functor $H^0(Q):\,H^0(\mathcal{C})\rightarrow H^0(\mathcal{C}')$ is an equivalence of categories.

A left (right) dg $\mathcal{C}$-module is a dg functor $\mathcal{C}\rightarrow\mathcal{C}_{dg}(k)$ ($\mathcal{C}^{op}\rightarrow\mathcal{C}_{dg}(k)$, where $\mathcal{C}^{op}$ is the opposite category of $\mathcal{C}$. Naturally, for every object $X$ in $\mathcal{C}$, we see $X^\wedge=\mathcal{C}(?,\,X)$ as a right dg module. A morphism between (left or right) dg modules is a natural transformation that is compatible with the dg structures for every object of $\mathcal{C}$. So the dg $\mathcal{C}$-modules form a category. A morphism of two dg modules is a quasi-isomorphism if it induces an isomorphism in homologies for every object. If we localize the category of dg $\mathcal{C}$-modules with respect to the quasi-isomorphisms, we get the derived category of $\mathcal{C}$ which is denoted by $\mathcal{D}(\mathcal{C})$.

It is known that $\mathcal{D}(\mathcal{C})$ is a triangulated category. Recall that an object $X$ of $\mathcal{D}(\mathcal{C})$  is compact if and only if $\Hom_{\mathcal{D}(\mathcal{C})}(X,\,?)$ commutes with arbitrary coproducts in $\mathcal{D}(\mathcal{C})$. All the compact objects form the full subcategory $\mathcal{D}_c(\mathcal{C})$.

\subsection{The dg enhancement}
For any dg category $\mathcal{C}$, there is a dg category embedding $\mathcal{C}\hookrightarrow\textrm{Pre-Tr}(\mathcal{C})$. $\textrm{Pre-Tr}(\mathcal{C})$ is called the \emph{pretriangulated hull} of $\mathcal{C}$, which is the smallest dg category containing $\mathcal{C}$ and admitting the shift functor $X[1]$ and the mapping cone $C_f$ for any morphism in $Z^0(\textrm{Pre-Tr}(\mathcal{C}))$. If we set $\mathrm{Tr}(\mathcal{C})=H^0(\textrm{Pre-Tr}(\mathcal{C}))$, then $\mathrm{Tr}(\mathcal{C})$ is a triangulated category with the shift functor and mapping cones inherited from $\textrm{Pre-Tr}(\mathcal{C})$. The concrete construction can be found in \cite{BK}.

A dg category is called \emph{pretriangulated} if the imbedding $\mathcal{C}\hookrightarrow\textrm{Pre-Tr}(\mathcal{C})$ is also a quasi-equivalence. So the equivalence of categories $H^0(\mathcal{C})\rightarrow \mathrm{Tr}(\mathcal{C})$ makes $H^0(\mathcal{C})$ a triangulated category.

Let $\mathcal{K}$ be a triangulated category. We say $\mathcal{K}$ is dg enhanced if there is a pretriangulated dg category $\mathcal{C}$ and an equivalence of triangulated categories $H^0(\mathcal{C})\rightarrow \mathcal{K}$. In this case, $\mathcal{C}$ is called a dg enhancement of $\mathcal{K}$.

For example, let $\mathcal{A}$ be an abelian category with enough injectives (like quasi-coherent sheaves on schemes). It is not hard to show the derived category $\mathscr{D}^b(\mathcal{A})$ is equivalent to the full subcategory in the homotopy category $\mathscr{K}^+(\mathcal{A})$ consisting of the complexes whose terms are all injectives and whose homology groups are almost all zeros. As showed in \cite{BK}, we know there is a full subcategory in $\mathcal{C}^+_{dg}(\mathcal{\mathcal{A}})$ (cf. the example in section 2.1) enhancing $\mathscr{D}^b(\mathcal{A})$ and we will denote this enhancement by $\mathcal{D}^b_{dg}(\mathcal{A})$.

\subsection{The noncommutative motives and Tannakian categories}
Two dg categories $\mathcal{B}$, $\mathcal{C}$ are said to be Morita equivalent if there is a dg functor $\mathcal{B}\rightarrow\mathcal{C}$ that gives an equivalence on the derived categories $\mathcal{D}(\mathcal{B})\rightarrow\mathcal{D}(\mathcal{C})$. Even more, we can make $\mathrm{dgcat}_k$ a Quillen model category with the Morita equivalences as its weak equivalences and its homotopy category is denoted by $\mathrm{Hmo}_k$. Moreover, a tensor product $\otimes^{\mathbf{L}}$ can be defined on $\mathrm{Hmo}_k$, with which we can make $\mathrm{Hmo}_k$ a symmetric monoidal category. \cite{CT}%\Note{[Add a reference here, about where this is done]}%

We also define the category $\mathrm{Hmo}_0$ as the category which has the same objects as $\mathrm{Hmo}_k$ and whose morphisms $\mathcal{B}\rightarrow\mathcal{C}$ are given by the Grothendieck group of $\mathrm{rep}(\mathcal{B},\mathcal{C})$. $\mathrm{rep}(\mathcal{B},\mathcal{C})$ is subcategory of bimodules $X$ in $\mathcal{D}(\mathcal{B}^{op}\otimes^{\mathbf{L}}\mathcal{C})$ such that $X(?,B)$ is in $\mathcal{D}_c(\mathcal{C})$, for any object $B$ of $\mathcal{B}$. There is a canonical functor $\mathrm{Hmo}_k\rightarrow\mathrm{Hmo}_0$. Let $\mathcal{A}$ be an additive category, then the functor $F:\mathrm{Hmo}_k\rightarrow\mathcal{A}$ is said an additive invariant if $F$ factors through $\mathrm{Hmo}_k\rightarrow\mathrm{Hmo}_0$ \cite{Ke1}. %\Note{[Add a reference here]}

According to Kontsevich \cite{Ko1}, we have the following important definitions: A dg category $\mathcal{C}$ is called smooth if the bimodule $\mathcal{C}(?,\,?)$ is a compact $\mathcal{C}^{op}\otimes^\mathbf{L}\mathcal{C}$ module. It is called proper if for any objects $X,\,Y$ of $\mathcal{C}$, the complex $\mathcal{C}(X,Y)$ is perfect.

The category of noncommutative Chow motives, $\mathrm{NChow}(k)_F$, with coefficients in the field $F$ is defined as the pseudo-abelian hull of the category whose objects are smooth and proper dg categories, whose morphisms from dg category $\mathcal{B}$ to $\mathcal{C}$ are the $F$-coefficient Grothendieck group $\mathbf{K}_0(\mathcal{B}^{op}\otimes^\mathbf{L}\mathcal{C})_F$, and where the composition of two morphisms is induced by $\otimes^\mathbf{L}$ on bimodules \cite{Tab1}.
%\Note{[Add a reference here]}

Similarly to what happens in the theory of motives in classical algebraic geometry, we still call a morphism in the category of noncommutative Chow motives a correspondence. Moreover, one can define the intersection number on these correspondences, and similarly to the case of algebraic cycles, one can define a numerical equivalence relation on them. If we mod out these relations, we get a new category. It is called the category of noncommutative numerical motives and is denoted by $\mathrm{NNum}(k)_F$. The main result in the paper \cite{MT1} shows that $\mathrm{NNum}(k)_F$ is abelian and semisimple.

Let $X$ be a projective variety over $k$, $D^b(\mathrm{Coh}(X))$ be the bounded derived category of coherent sheaves on $X$, and $per(X)$ the subcategory of perfect complexes on $X$. When $X$ is smooth in addition, it is well-known that $D^b(\mathrm{Coh}(X))$ is equivalent to $per(X)$. In this case, we also know $D^b(\mathrm{Coh}(X))$ (=$per(X)$) has a dg enhancement, denoted by $D^b_{per}(X)$. According to the result of To\"{e}n \cite{TV}, the dg category $D^b_{per}(X)$ is smooth and proper. So for any smooth projective variety $X$, $D^b_{per}(X)$ gives an object of the noncommutative Chow motives $\mathrm{NChow}(k)_F$. In this way, the category of classical pure Chow motives are "embedded" (mod the Tate twists) the into $\mathrm{NChow}(k)_F$ and so are the numerical motives \cite{Tab2}. %\Note{[Add references here]}

Let $F$ be a field, and $L/F$ a field extension. Let $\mathcal{A}$ be
a rigid symmetric monoidal $F$-linear abelian category. An $L$-valued \emph{fibre functor} is a faithful exact tensor functor $\omega: \mathcal{A}\rightarrow Vect_L$. Where $Vect_L$ means the category of finite generated $L$-modules. We then have the definitions: i), $\mathcal{A}$ is called a \emph{Tannakian} category if there is an $L$-valued fibre functor on $\mathcal{A}$; ii), $\mathcal{A}$ is called a \emph{neutral Tannakian} category if there is an $F$-valued fibre functor on $\mathcal{A}$. When $\mathcal{A}$ is neutral Tannakian, we can define the $F$-algebraic group, which is called the Galois group of $\mathcal{A}$, as $Gal(\mathcal{A})(R)=\{\mathrm{isomorphisms\,of\,tensor\,functors}\,\,\omega\otimes R\rightarrow\omega\otimes R\}$, for an $F$-algebras $R$. If we let $G_{\mathcal{A}}=Gal(\mathcal{A})$ and $Rep_F(G_{\mathcal{A}})$ be the category of finite dimensional $F$-representations of $G_\mathcal{A}$, then there is an equivalence of categories $\mathcal{A}\tilde{\rightarrow}Rep_F(G_\mathcal{A})$.

In Grothendieck's theory of pure motives, there are several standard conjectures. They are named as standard conjecture B, C, D and I. When the standard conjectures C and D hold, one can make the category of pure numerical motives (the so-called Grothendieck motives) a neutral Tannakian category by changing the signs of the symmetric isomorphisms and using a Weil cohomology as the fiber functor \cite{Mi}.

Now, with the same notations above, we call a tensor functor $\omega: \mathcal{A}\rightarrow sVect_L$ a $L$-valued \emph{super-fibre functor} if it is exact and faithful. Where $sVect_L$ stands for the finite dimensional super ($\mathbb{Z}_2$-graded) vector spaces over $L$. Similarly, one can give the definitions of \emph{super-Tannakian} and \emph{neutral super-Tannakian} categories (please consult \cite{D}, also Appendix A of \cite{MT2}). %\Note{[Add reference here]}

It has been shown in \cite{MT2}, the category $\mathrm{NNum}(k)_F$ is super-Tannakian. Even more, in the same paper, Marcolli and Tabuada stated the standard conjecture C and D in noncommutative motives. Moreover, they proved if the noncommutative standard conjecture C and D hold, then $\mathrm{NNum}(k)_F$ can be modified to a neutral Tannakian category.

\section{Holomorphic bundles on the noncommutative tori}
\subsection{Noncommutative tori and holomorphic bundles}
If we now fix an irrational real number $\theta$, then the algebra $\mathcal{A}_\theta$ of smooth functions on the noncommutative torus $\mathbb{T}_\theta$ is $$\mathcal{A}_\theta=\bigg\{\sum_{(m,n)\in\mathbb{Z}}a_{m,n}U_1^mU_2^n\mid a_{m,n}\in\mathcal{S}(\mathbb{Z}^2)\bigg\}$$
where $\mathcal{S}(\mathbb{Z}^2)$ denotes the Schwartz space on $\mathbb{Z}^2$ and $U_1$, $U_2$ satisfy the relation $U_1U_2=e^{2\pi i\theta}U_2U_1$.

A vector bundle on $\mathbb{T}_\theta$ is a right finitely generated projective $\mathcal{A}_\theta$-module. We denote by $\pr$ the category of vector bundles on $\mathbb{T}_\theta$.

If $\tau=\tau_1+i\tau_2\in\mathbb{C}$ with $\tau_1,\tau_2\in\mathbb{R}$ and $\tau_2\neq0$, then there is a complex structure on $\mathbb{T}_\theta$ given by the derivation $\delta_\tau$ on $\mathcal{A}_\theta$ such that $$\delta_\tau(U_1)=2\pi\tau i,\quad\delta_\tau(U_2)=2\pi i.$$ From now on, we use the notation $\mathbb{T}_{\theta,\tau}$ to indicate that there is a complex structure $\delta_\tau$ on $\mathbb{T}_\theta$.

Let $E$ be an object in $\pr$. A \emph{holomorphic structure} on $E$ is a map $\nabla:E\rightarrow E$ such that $\nabla(ea)=\nabla(e)a+e\delta_\tau(a)$ for $e\in E, a\in\mathcal{A}_\theta$. We may say that the pair $(E,\nabla)$ is a \emph{holomorphic bundle} on $\mathbb{T}_\theta$. A morphism between two holomorphic bundles is a morphism of $\mathcal{A}_\theta$-modules which is also compatible with the holomorphic structures. We also denote the category of holomorphic bundles by the notation $\vc$. Given a pair of integers $(m,n)$ such that $n+\theta m\neq0$,  in addition to satisfying $m\neq0$, there is a holomorphic bundle $(E_{n,m},\nabla_z)$ in $\vc$, such that $E_{n,m}$ is the Schwartz space $\mathcal{S}(\mathbb{R}\times\mathbb{Z}\big/m\mathbb{Z})$ and $\nabla_z(f)=\frac{\partial f}{\partial x}+2\pi i(\tau\mu x+z)f$, where $\mu=\frac{m}{n+\theta m}$ and $z$ is a complex number. When $m=0$, we let $E_{n,0}=\mathcal{A}_\theta^{|n|}$ and define the holomorphic structure as $\nabla_z(a)=\delta_\tau(a)+2\pi iza$. For any vector bundle $E$ on $\mathbb{T}_\theta$, $E$ is isomorphic to $E_{n,m}^{\oplus k}$ for some $E_{n,m}$ \cite{R}. So, we can naturally give a holomorphic structure on $E$. However, the converse is not always true. Not every holomorphic bundle is given by a direct sum of these $(E_{n,m},\nabla_z)$'s.

The holomorphic bundles are closely related to elliptic curves. In fact,
according to a remarkable result of Schwarz and Polishchuck \cite{PS}, \cite{P}, the category of holomorphic vector bundles on a noncommutative torus is equivalent to the heart of some t-structure on $D^b(X)$ for some $\theta$.

More precisely, let $X=X_\tau=\frac{\mathbb{C}}{\mathbb{Z}+\mathbb{Z}\tau}$ be an elliptic curve and $\mathrm{Coh}(X)$ be the category of coherent sheaves on $X$. For any real number $\theta$, we denote by $\mathrm{Coh}_{>\theta}$ the subcategory of all the coherent sheaves whose semistable factors all have slopes $>\theta$ and $\mathrm{Coh}_{\leq\theta}$ as the subcategory of all the coherent sheaves whose semistable factors all have slopes $\leq\theta$. Then, $(\mathrm{Coh}_{>\theta},\mathrm{Coh}_{\leq\theta})$ is a torsion pair of $\mathrm{Coh}(X)$. If we let $D^b(X)=D^b(\mathrm{Coh}X)$ and continue to define $$D^{\theta,\leq0}=\{F\in D^b(X)\mid H^{>0}(F)=0,\,H^0(F)\in \mathrm{Coh}_{>\theta}\},$$$$D^{\theta,\geq1}=\{F\in D^b(X)\mid H^{<0}(F)=0,\,H^0(F)\in \mathrm{Coh}_{\leq\theta}\},$$ then $(D^{\theta,\leq0},\,D^{\theta,\geq0})$ is a t-structure on $D^b(X)$. We denote the heart of this t-structure by $\mathcal{C}^\theta$.

A heart of a t-structure is always an abelian category. So immediately, the category of holomorphic vector bundles on a noncommutative torus is abelian.

\subsection{Cyclic homology of holomorphic bundles}

It is well-known that for any $k$-algebra $A$, the category of finitely generated projective modules over $A$ is an exact category. By this comment we know $\pr$ is an exact category. According to last paragraph we also know that $\vc$ is an abelian category. The following proposition gives a relation between exact sequences in these two categories.
\begin{prop}Let $F:\mathrm{Vect}(\mathbb{T}_{\theta})\rightarrow \mathrm{proj}\mathcal{A}_\theta$ be the functor that forgets the holomorphic structure on the vector bundle. Thus $F$ is faithful and exact. Moreover every exact sequence in $\mathrm{proj}\mathcal{A}_\theta$ comes from an exact sequence in $\mathrm{Vect}(\mathbb{T}_\theta)$.
\end{prop}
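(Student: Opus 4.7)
The plan is to verify the three assertions in turn: faithfulness, exactness, and the surjectivity of $F$ on short exact sequences. Faithfulness is immediate, because by the definition of $\vc$ a morphism of holomorphic bundles already \emph{is} an $\mathcal{A}_\theta$-linear map subject to the compatibility condition $\nabla_2 \circ \phi = \phi \circ \nabla_1$, so $F$ does not identify distinct morphisms.

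For exactness, I would start from a short exact sequence $0 \to (E',\nabla') \to (E,\nabla) \to (E'',\nabla'') \to 0$ in $\vc$ and argue that the underlying sequence of $\mathcal{A}_\theta$-modules is exact. The key point is that kernels and cokernels in $\vc$ are computed at the module level, with the holomorphic structure inherited by restriction or quotient; this can be seen through the Polishchuk--Schwarz equivalence $\vc \simeq \mathcal{C}^\theta$ recalled above, under which the abelian structure of $\vc$ matches that of the heart of a t-structure on $D^b(X)$. Since each of $E'$, $E$, $E''$ is by definition in $\pr$, the resulting sequence is an admissible short exact sequence in the exact category $\pr$.

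For the lifting statement, I would exploit the projectivity of the right-hand term. Given an admissible short exact sequence $0 \to P' \xrightarrow{i} P \xrightarrow{p} P'' \to 0$ in $\pr$, projectivity of $P''$ produces a section $s : P'' \to P$, so that $P = i(P') \oplus s(P'')$. By the classification $E \cong E_{n,m}^{\oplus k}$ recalled just before the proposition, every object of $\pr$ admits at least one holomorphic structure; choose $\nabla'$ on $P'$ and $\nabla''$ on $P''$, and transport them to $P$ via
$$\nabla\bigl(i(x) + s(y)\bigr) = i\bigl(\nabla'(x)\bigr) + s\bigl(\nabla''(y)\bigr).$$
A direct Leibniz check confirms that $\nabla$ is a holomorphic structure on $P$, and $i$, $p$ are morphisms in $\vc$ by construction, exhibiting the given module sequence as the image under $F$ of the short exact sequence $0 \to (P',\nabla') \to (P,\nabla) \to (P'',\nabla'') \to 0$ in $\vc$.

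The only delicate step is the second one: one must verify that the abelian structure of $\vc$—which is not visible from its elementary definition but is conferred by the equivalence with $\mathcal{C}^\theta$—is compatible with the exact structure of $\pr$, in the sense that the underlying modules of kernels and cokernels are genuinely the module-theoretic ones and remain projective. Once that compatibility is in hand, the remaining pieces are essentially formal manipulations with splittings and the Leibniz rule.
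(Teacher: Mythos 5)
Your treatment of the lifting of exact sequences is correct but takes a genuinely different and shorter route than the paper. The paper, starting from a given $\nabla_2$ on the quotient $E_2$, constructs a compatible $\nabla_1$ on $E_1$ by writing $E_1$ as a summand of a free module, lifting there explicitly, and projecting back; it then restricts $\nabla_1$ to the kernel $E_0$ (using that $f\nabla_1 = \nabla_2 f$ forces $\nabla_1(E_0)\subseteq E_0$). You instead split the sequence using projectivity of $P''$, choose arbitrary holomorphic structures on the two summands, and take their direct sum. Both constructions rely on the fact, recalled in the paper just before the proposition, that every object of $\pr$ carries some holomorphic structure. Your route is more economical and gives slightly more: it exhibits the lifted sequence as split already in $\vc$, since the section $s$ and the complementary retraction both commute with the glued connection. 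The paper's iterative construction is the one to use if the holomorphic structure on $E_2$ is prescribed in advance and must be matched, which is what its intermediate claim about surjections proves.

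There is, however, a real problem in your discussion of exactness. You assert that kernels and cokernels in $\vc$ "are computed at the module level, with the holomorphic structure inherited by restriction or quotient," and cite the Polishchuk--Schwarz equivalence $\vc\simeq\mathcal{C}^\theta$ as the reason. This is exactly backwards: $\mathcal{C}^\theta$ is the heart of a \emph{nonstandard} t-structure on $D^b(X)$, and its kernels and cokernels are formed using the truncation functors of that t-structure, not by taking kernels and cokernels of underlying maps. Concretely, the set-theoretic kernel of an $\mathcal{A}_\theta$-linear map between finitely generated projective $\mathcal{A}_\theta$-modules need not be finitely generated projective, so it need not even be an object of $\vc$; the kernel in $\vc$ is then a different object entirely, supplied by the equivalence with $\mathcal{C}^\theta$. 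You flag this step as "delicate," which is honest, but the compatibility statement you propose to verify is not the right one and would fail in general. The paper sidesteps the question ("it is easy to see that $F$ is faithful and exact") and the honest reading is that exactness of $F$ is being asserted for the exact structure on $\vc$ induced by $F$ itself, not for the abelian structure coming from $\mathcal{C}^\theta$; if one insists on the latter reading, the claim needs an argument that neither the paper nor your proposal supplies.
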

\begin{proof} It is easy to see that $F$ is faithful and exact.

We first show that if $f:E_1\rightarrow E_2$ is surjective in $\pr$, then if there is a holomorphic structure $\nabla_2$ on $E_2$, then we can find a holomorphic structure $\nabla_1$ on $E_1$, which makes $f$ a morphism in $\vc$.

We consider the case where $E_1$ is free at first and then we generalize it to the finitely generated projective case.

If $E_1$ is free and has basis $\{e_1,\ldots,e_n\}$, then $\{f(e_1)\ldots f(e_n)\}$ spans $E_2$. Moreover, if $\nabla_2(f(e_i))=\sum_jf(e_j)b_i^j$, we simply define $\nabla_1(e_i a^i)=e_i\delta_\tau(a^i)+\sum_je_jb_i^ja^i$. One can easily check that this is a holomorphic structure on $E_1$, which
makes $f$ a morphism in $\vc$.

Now we consider the case where $E_1$ is a finitely generated projective $\mathcal{A}_\theta$-module. Therefore, $E_1$ is a direct summand of a finitely generated free module $E_0$, with the projection $p:E_0\rightarrow E_1$. Moreover, the composition map $f\circ p: E_0\rightarrow E_2$ is a surjection. Thus, as it is showed in the previous case, there is a holomorphic structure $\nabla_0$ on $E_0$ making $f\circ p$ a morphism in $\vc$. We define $\nabla_1=p\nabla_0$. We will see that $\nabla_1$ is a holomorphic structure on $E_1$, which makes $f:(E_1,\nabla_1)\rightarrow(E_2,\nabla_2)$ a morphism in $\vc$.

We have $p\nabla_0(ea)=p(\nabla_0(e)a+e\delta_\tau(a))=p\nabla_0(e)a+e\delta_\tau(a)$. This determines a holomorphic structure on $E_1$. We also have
$f(p\nabla_0(e))=f\circ p(\nabla_0(e))=\nabla_2(f\circ p(e))=\nabla_2(f(e))$. Hence $f$ is compatible with these holomorphic structures.

If $0\rightarrow E_0\rightarrow E_1\rightarrow E_2\rightarrow0$ is a short exact sequence in $\pr$ and $(E_2,\nabla_2)$ is an object in $\vc$, then there is a $\nabla_1$ on $E_1$ making $E_1\rightarrow E_2$ a morphism in $\vc$. Moreover if we see $E_0$ as a kernel, then $E_0$ naturally has a holomorphic structure $\nabla_0$ inherited from $(E_1,\nabla_1)$. Thus, $0\rightarrow(E_0,\nabla_0)\rightarrow(E_1,\nabla_1)\rightarrow(E_2,\nabla_2)\rightarrow0$ is an short exact sequence in $\vc$ and with the same morphisms if we forget the holomorphic structures.

For a long exact sequence $0\rightarrow E_0\rightarrow E_1\rightarrow\cdots\rightarrow E_n\rightarrow0$, if we break it into short exact sequences and apply the argument above, then we have a long exact sequence in $\vc$. The morphisms remain the same, when we forget the holomorphic structures.

\end{proof}

Now let us turn to the cyclic homology of these categories. In the spirit of Mitchell, for a dg category $\mathcal{B}$, its mixed complex $C(\mathcal{B})$ is given by the following constructions:
\begin{itemize}
\item[i)] the n-th term of precyclic complex:$$\coprod_{B_0,\ldots,B_n\in\mathrm{Ob}(\mathcal{B})}\mathcal{B}(B_n,B_0)\otimes\mathcal{B}(B_{n-1},B_n)\otimes\cdots\otimes\mathcal{B}(B_0,B_1);$$
\item[ii)] the degeneracy maps and the cyclic operator: $$\partial_i(f_n,\ldots,f_i,f_{i-1},\ldots,f_0)=\begin{cases}(f_n,\ldots,f_if_{i-1},\ldots,f_0)&i\neq0\\(-1)^{\alpha}(f_0f_n,\ldots,f_1)&i=0\end{cases},$$ $$t(f_{n-1},\ldots,f_0)=(-1)^\alpha(f_0,f_{n-1},f_{n-2},\ldots,f_1),$$ where $\alpha=n+\mathrm{deg}f_0\cdot(\mathrm{deg}f_1+\cdots+\mathrm{deg}f_{n-1})$.
\end{itemize}
Given an exact category $\mathcal{A}$, we denote the dg category of bounded complexes over $\mathcal{A}$ by $\mathscr{C}^b(\mathcal{A})$ and its subcategory of acyclic complexes by $\mathscr{AC}^b(\mathcal{A})$. The mixed complex of the exact category $\mathcal{A}$ is defined as the mapping cone of the embedding of complexes $\mathscr{AC}^b(\mathcal{A})\hookrightarrow\mathscr{C}^b(\mathcal{A})$. The mixed complexes can be also be viewed as dg modules over the dg algebra $k[\epsilon]/(\epsilon^2)$ with $\mathrm{deg}\epsilon=1$ and trivial differentials. In the derived category of $\Lambda$, we then define the cyclic homologies $HC$, $HN$, $HP$ of $\mathcal{A}$ by applying functors $-\otimes_\Lambda^{\mathbf{L}}k$, $\mathbf{R}\Hom_\Lambda(k,-)$ and $\mathbf{R}\varprojlim P_k[-2n]\otimes_\Lambda-$ to its mixed complex and taking the homology. So in the sense of Keller, all the cyclic homologies are ``unified" by the mixed complex \cite{Ke2}. In the following, when used without further specification, the word ``cyclic homology" of an exact category will really stand for the mixed complex of it.

Recall that for a smooth projective variety $X$, the mixed complex can be defined on the category of coherent sheaves over $X$ \cite{Ke3}.
\begin{prop}The cyclic homology of $\vc$ is the same as the cyclic homology of $X$.\end{prop}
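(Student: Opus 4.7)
The plan is to factor the proof through the Schwarz--Polishchuk equivalence and Keller's invariance theorem for the mixed complex. Recall from the previous subsection that $\vc$ is equivalent, as an abelian category, to $\mathcal{C}^{\theta}$, the heart of the nonstandard t-structure $(D^{\theta,\leq 0}, D^{\theta,\geq 0})$ on $D^{b}(X)$. Since the mixed complex of an exact category is manifestly a Morita-type invariant (it only depends on $\mathscr{C}^{b}(\mathcal{A})$ and $\mathscr{AC}^{b}(\mathcal{A})$ up to equivalence of exact categories), the first step is to reduce the statement to showing that the mixed complex of $\mathcal{C}^{\theta}$ agrees with that of $\mathrm{Coh}(X)$.

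Next, I would invoke Keller's agreement theorem, which identifies the mixed complex of an exact category $\mathcal{A}$ with the mixed complex of a natural dg enhancement of its bounded derived category $\mathscr{D}^{b}(\mathcal{A})$. Applied on one hand to $\mathcal{A}=\mathrm{Coh}(X)$ and on the other to $\mathcal{A}=\mathcal{C}^{\theta}$, this reduces the problem to producing a quasi-equivalence between the dg enhancements of $\mathscr{D}^{b}(\mathrm{Coh}(X))$ and $\mathscr{D}^{b}(\mathcal{C}^{\theta})$. Because $\mathcal{C}^{\theta}$ is the heart of a bounded t-structure on $D^{b}(X)$, the realization functor $\mathscr{D}^{b}(\mathcal{C}^{\theta})\to D^{b}(X)$ is an equivalence of triangulated categories; and because the nonstandard t-structure is obtained by tilting at the torsion pair $(\mathrm{Coh}_{>\theta},\mathrm{Coh}_{\leq\theta})$ already exhibited inside $\mathrm{Coh}(X)$, this equivalence lifts to a quasi-equivalence of the corresponding dg enhancements $\mathcal{D}^{b}_{dg}(\mathcal{C}^{\theta})\simeq \mathcal{D}^{b}_{dg}(\mathrm{Coh}(X))=\mathcal{D}^{b}_{dg}(X)$.

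Chaining these identifications together gives
\[
C(\vc)\;\simeq\;C(\mathcal{C}^{\theta})\;\simeq\;C(\mathcal{D}^{b}_{dg}(\mathcal{C}^{\theta}))\;\simeq\;C(\mathcal{D}^{b}_{dg}(X))\;\simeq\;C(\mathrm{Coh}(X)),
\]
and the right-hand side is the mixed complex defining the cyclic homology of $X$ in the sense recalled at the end of the previous subsection (after \cite{Ke3}).

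The main obstacle I anticipate is the second step: one has to be careful that the realization functor for a tilt actually lifts to a quasi-equivalence of dg enhancements, rather than merely to a triangle equivalence, and that the hypotheses of Keller's agreement theorem apply to the exact category $\mathcal{C}^{\theta}$ (which is abelian but embedded inside $D^{b}(X)$ in a nonstandard way). For the first, I would use the universal property of the dg enhancement of a bounded derived category together with the fact that the t-structure here is bounded and noetherian, so that the heart has enough structure (finite length, in fact) for a standard argument to produce the dg lift. For the second, the fact that $\mathcal{C}^{\theta}$ is the heart of a t-structure on the smooth proper $D^{b}(X)$ ensures it admits the filtered resolutions needed to apply the agreement theorem. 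The remaining verifications (functoriality of the mixed complex under the forgetful step $\vc\to\mathcal{C}^{\theta}$ and naturality of the tilting equivalence) are formal.
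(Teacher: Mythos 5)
Your plan takes the same opening step as the paper (reduce via Schwarz--Polishchuk to comparing $C(\mathcal{C}^\theta)$ with $C(\mathrm{Coh}(X))$, and invoke Keller's invariance theorem for the mixed complex), but then you route the comparison through the realization functor $\mathscr{D}^b(\mathcal{C}^\theta)\to D^b(X)$ and a claimed dg lift of it. That middle step is where the gap sits. The realization functor is only a triangle equivalence, and you never produce a dg functor at all: Keller's theorem in \cite{Ke2} is stated for \emph{exact} functors between exact categories (or for dg functors), not for abstract triangle equivalences, so without an actual dg-level map there is nothing to feed into it. The justification you offer for the lift does not hold up: $\mathcal{C}^\theta$ is emphatically \emph{not} of finite length (it contains shifted coherent sheaves on an elliptic curve with arbitrarily long filtrations), and ``universal property of the dg enhancement'' is not a theorem one can cite in this generality --- uniqueness-of-enhancement results require hypotheses and, even when available, give uniqueness rather than a canonical functorial lift of a given triangle equivalence.

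The paper sidesteps the lift entirely by exhibiting an honest common exact subcategory rather than an abstract triangle equivalence. Since the t-structure comes from the cotilting torsion pair $(\mathrm{Coh}_{>\theta},\mathrm{Coh}_{\leq\theta})$, the exact category $\mathrm{Coh}_{\leq\theta}$ sits as a full exact subcategory of $\mathrm{Coh}(X)$, and its shift $\mathrm{Coh}_{\leq\theta}[1]$ (the same exact category) sits as a full exact subcategory of $\mathcal{C}^\theta$; by Bondal--van den Bergh both inclusions induce equivalences on bounded derived categories. Now Keller's theorem applies to the two genuine exact inclusions and gives quasi-isomorphisms $C(\mathrm{Coh}_{\leq\theta})\to C(\mathrm{Coh}(X))$ and $C(\mathrm{Coh}_{\leq\theta}[1])\to C(\mathcal{C}^\theta)$, and the middle terms are literally equal. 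This is the content you were reaching for when you wrote that the equivalence ``lifts because the t-structure is obtained by tilting''; the torsion-pair factorization is precisely the mechanism that makes the informal idea rigorous, and it removes the need to say anything about dg enhancements of derived categories at all. If you want to keep your two-sided chain of quasi-isomorphisms, replace the realization-functor step by this torsion-pair zig-zag.
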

\begin{proof} By the equivalence of categories in the previous section, we only consider the heart $\mathcal{C}^\theta$ of some t-structure of $D^b(X)$. The proof is similar to the proof of the statement $D^b(\mathcal{C}^\theta)=D^b(X)$.

It is known that, for elliptic curves, every bounded t-structure is given by a cotilting torsion pair.
In our case, it is $(\mathrm{Coh}_{>\theta},\,\mathrm{Coh}_{\leq\theta})$ \cite{GKR}. So $(\mathrm{Coh}_{\leq\theta}[1],\,\mathrm{Coh}_{>\theta})$ is a tilting torsion pair in $\mathcal{C}^\theta$.

$\mathrm{Coh}_{\leq\theta}$ is an exact category. Moreover, by the lemma and proposition and their duals of \cite{BvdB}, we have $D^b(X)=D^b(\mathrm{Coh}_{\leq\theta})$ and $D^b(\mathcal{C}^\theta)=D^b(\mathrm{Coh}_{\leq\theta}[1])$.

By the main theorem of \cite{Ke2}, we have the quasi-isomorphisms of cyclic complexes induced by the inclusions: $C(\mathrm{Coh}_{\leq\theta})\tilde{\rightarrow}C(X)$ and $C(\mathrm{Coh}_{\leq\theta}[1])\tilde{\rightarrow}C(\mathcal{C}^\theta)$. So there is a quasi-isomorphism $C(\mathcal{C}^\theta)\simeq C(X)$.
\end{proof}

\begin{prop}The functor $F$ in proposition 3.1 induces an injection $$C(\vc) \rightarrowtail C(\pr)\tilde{\rightarrow}C(\mathcal{A}_\theta).$$\end{prop}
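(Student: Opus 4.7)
The plan is to exploit the two key properties of $F$ established in Proposition 3.1: faithfulness, and the lifting of exact sequences from $\pr$ to $\vc$. Once these are in hand, the injection is meant to follow essentially term by term, while the quasi-isomorphism on the right is a standard Morita invariance statement.

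First, I would record that because $F$ is faithful, the $k$-linear map $\vc(E_0,E_1)\hookrightarrow \pr(FE_0,FE_1)$ is injective for every pair of objects. Working over a field, tensor products of injective $k$-linear maps remain injective, so for each fixed tuple of objects the $n$-th summand of the precyclic complex of $\vc$ injects into the corresponding summand of the precyclic complex of $\pr$. This handles the underlying additive structure.

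Next, I would upgrade this to the full mixed complex of each exact category. By definition $C(\vc)$ and $C(\pr)$ are the mapping cones of $C(\mathscr{AC}^b(\vc))\hookrightarrow C(\mathscr{C}^b(\vc))$ and $C(\mathscr{AC}^b(\pr))\hookrightarrow C(\mathscr{C}^b(\pr))$ respectively. Exactness of $F$ (Proposition 3.1) ensures that $F$ extends term-wise to dg functors on both $\mathscr{C}^b$ and $\mathscr{AC}^b$, each of which remains faithful on hom-complexes, so the previous paragraph gives a commuting square of injective chain maps. The mapping cone of such a square is injective on the underlying graded $k$-module (the cone is just $Y\oplus X[1]$), yielding the desired $C(\vc)\rightarrowtail C(\pr)$. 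For the quasi-isomorphism $C(\pr)\tilde{\rightarrow}C(\mathcal{A}_\theta)$, I would invoke the Morita-type theorem of Keller (the mixed complex of the exact category of finitely generated projective right modules over a $k$-algebra $A$ is quasi-isomorphic to the mixed complex of $A$), applied to $A=\mathcal{A}_\theta$.

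The main obstacle I anticipate is making the injection genuine at the level of the coproducts indexing the bar construction. Two distinct objects $(E,\nabla_1)$ and $(E,\nabla_2)$ of $\vc$ share the same underlying $\mathcal{A}_\theta$-module $F(E)$, so many summands of $C(\vc)$ land in a single summand of $C(\pr)$; for instance, $\lambda\cdot\mathrm{id}_E$ sits in $\vc((E,\nabla_i),(E,\nabla_i))$ for every holomorphic structure $\nabla_i$, and all these copies map to the same element of $\pr(E,E)$. To make the statement literally correct one must either pass to a suitable skeleton of $\pr$ together with a chosen section of $F$ on its essential image (picking one holomorphic structure per bundle admitting one) and then identify the summands accordingly, or interpret $\rightarrowtail$ as a monomorphism in the homotopy category of mixed complexes. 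In either case the essential input is precisely the faithfulness plus exact-lifting statements of Proposition 3.1, and the argument reduces to the termwise bookkeeping sketched above.
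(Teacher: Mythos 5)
You have correctly isolated the one real difficulty in this proposition, but the two fixes you offer in your final paragraph do not resolve it, and the paper's proof turns on a device you do not construct. The difficulty, as you say, is that the termwise map on precyclic complexes is \emph{not} injective: the coproduct defining $C(\vc)$ is indexed by tuples of objects of $\vc$, and since $F$ identifies $(E,\nabla_1)$ with $(E,\nabla_2)$ on objects, distinct summands of $C(\vc)$ are superimposed in $C(\pr)$ (your example with the identities is exactly right: $\mathrm{id}_{(E,\nabla_1)}-\mathrm{id}_{(E,\nabla_2)}$ is a nonzero element of the kernel of the naive map). Your first proposed repair --- a skeleton of $\pr$ plus a chosen section of $F$ --- does not help, because the problem is not that $\pr$ has too many isomorphic objects but that $\vc$ has many objects sitting over a single object of $\pr$; choosing one holomorphic structure per bundle gives a section of $F$ but does nothing to make the map \emph{out of} $C(\vc)$ injective. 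Your second repair (``monomorphism in the homotopy category of mixed complexes'') is not a workable notion either, since monomorphisms in a derived or triangulated setting carry essentially no information.

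What the paper does instead is introduce an auxiliary exact category $\pr^\nabla$ having the \emph{objects} of $\vc$ but the \emph{morphisms} of $\pr$, i.e. $\Hom_{\pr^\nabla}((E_1,\nabla_1),(E_2,\nabla_2))=\Hom_{\mathcal{A}_\theta}(E_1,E_2)$. Then $F$ factors as $\vc\hookrightarrow\pr^\nabla\rightarrow\pr$. The first functor is the identity on objects and injective on hom-sets, so the index sets of the coproducts now match and your termwise argument (faithfulness plus injectivity of tensor products over a field, extended through $\mathscr{C}^b$, $\mathscr{AC}^b$ and the mapping cone) applies verbatim to give a genuine chain-level injection $C(\vc)\hookrightarrow C(\pr^\nabla)$. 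The second functor is fully faithful and surjective on objects, hence an equivalence of exact categories, and Keller's invariance theorem gives a quasi-isomorphism $C(\pr^\nabla)\tilde{\rightarrow}C(\pr)$. This factorization, injection followed by quasi-isomorphism, is exactly what the displayed formula in the proposition asserts. Your treatment of the right-hand arrow $C(\pr)\tilde{\rightarrow}C(\mathcal{A}_\theta)$ is fine (the paper cites McCarthy rather than Keller, but it is the same Morita-invariance statement). So the missing ingredient is precisely the intermediate category $\pr^\nabla$; without it, or some equivalent bookkeeping, the claimed injection does not exist at the chain level.
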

\begin{proof} The last quasi-isomorphism is due to the theorem of McCarthy on the special homotopy \cite{Mc}.

To show $F$ induces the injection $C(\vc)\rightarrowtail C(\pr)$, we need to introduce a third category $\pr^\nabla$. $\pr^\nabla$ has the same objects as $\vc$. For any $(E_1,\nabla_1)$, $(E_2,\nabla_2)$ in $\pr^\nabla$, we define $\Hom_{\pr^\nabla}((E_1,\nabla_1),(E_2,\nabla_2))=\Hom_{\mathcal{A}_\theta}(E_1,E_2)$.

We can define a natural functor $\pr^\nabla\rightarrow\pr$ simply as the forgetful functor
that drops the $\nabla$'s.
%by dropping those $\nabla$'s off.
It is obvious that this functor is fully faithful. Moreover, it is surjective on objets (hence essentially surjective). This is an equivalence of categories. This gives the quasi-isomorphism $C(\pr^\nabla)\tilde{\rightarrow}C(\pr)$ by Keller's theorem again.

On the other hand, $\vc$ is a subcategory of $\pr^{\nabla}$ (but not full). So the functor $F:\vc\rightarrow\pr$ is just the composition of $\vc\hookrightarrow\pr^\nabla\rightarrow\pr$. According to the definition of mixed complex of exact categories, we see $\mathscr{C}^b(\vc)\subset\mathscr{C}^b(\pr^\nabla)$ and $\mathscr{AC}^b(\vc)\subset\mathscr{AC}^b(\pr^\nabla)$. It follows that $C(\vc)\hookrightarrow C(\pr^\nabla)$. Thus $F$ induces the morphism of complexes: $$C(\vc)\hookrightarrow C(\pr^\nabla)\tilde{\rightarrow}C(\pr).$$
\end{proof}

\section{Motives of noncommutative tori}
\subsection{The(weak)t-structures}In the previous section, we recalled the known fact that the category of holomorphic bundles on the noncommutative torus is equivalent to a heart of a nonstandard t-structure on the derived category of some elliptic curve. We know that an elliptic curve, as a smooth projective variety, can be used to define an objet in the category of pure motives and so an object in the category of noncommutative pure motives. In this section we propose a way to extend the concept of t-structure to the frame of noncommutative motives. In this way, we can obtain objects with the properties of
noncommutative tori in the category of noncommutative motives.

Assume $\mathcal{C}$ is a dg category which has a shift functor $E[i]$ and such that for any closed and degree $0$ morphism $f:E\rightarrow F$ in $\mathcal{C}$ there is a mapping cone $Cone(f)$ in $\mathcal{C}$.

\begin{defn}
Let $\mathcal{C}^{\leq0}$ and $\mathcal{C}^{\geq0}$ be full subcategories of $\mathcal{C}$, $\mathcal{C}^{\leq n}=\mathcal{C}^{\leq0}[-n]$ and $\mathcal{C}^{\geq n}=\mathcal{C}^{\geq0}[-n]$. One says that $(\mathcal{C}^{\leq0},\,\mathcal{C}^{\geq0})$ is a \textbf{(weak)-t-structure} if:
\begin{itemize}
\item[(i)] $\mathcal{C}^{\leq-1}\subset\mathcal{C}^{\leq0}$ and $\mathcal{C}^{\geq1}\subset\mathcal{C}^{\geq0}$.
\item[(ii)] For $X\in \mathrm{Ob}(\mathcal{C}^{\leq0})$ and $Y\in\mathrm{Ob}( \mathcal{C}^{\geq1})$, $\mathrm{Hom}(X,\,Y)=0$ in $H^0(\mathcal{C})$.
\item[(iii)] For any $X\in\mathrm{Ob}(\mathcal{C})$, there is a triangle $X_0\rightarrow X\rightarrow X_1\rightarrow X_0[1]$ in $\mathcal{C}$ with $X_0\in\mathrm{Ob}(\mathcal{C}^{\leq0})$ and $X_1\in\mathrm{Ob}( \mathcal{C}^{\geq1})$, which is quasi-equivalent to a triangle having the form $E\rightarrow F\rightarrow Cone(f)\rightarrow E[1]$ for some closed degree $0$ morphism $f:E\rightarrow F$.
\end{itemize}
\end{defn}
\begin{rem} Here, we may use ``weak" in the sense of quasi-equivalence. As usual, the word 	``strong" stands for the dg-equivalence.\end{rem}

Recall that a triangulated category $\mathcal{K}$ is dg enhanced, if there is a pretriangulated category $\mathcal{C}$ with an equivalence $H^0(\mathcal{C})\rightarrow \mathcal{K}$ of triangulated categories. We know that the category of $H^0(\mathcal{C})$ has the same objects as $\mathcal{C}$. So, if $(\mathcal{K}^{\leq0},\,\mathcal{K}^{\geq0})$ is a t-structure of triangulated categories on $\mathcal{K}=H^0(\mathcal{C})$, then we get two full subcategories of $\mathcal{C}$ generated by the objects of $\mathcal{K}^{\leq0}$ and $\mathcal{K}^{\geq0}$, and we use the notation $(\mathcal{C}^{\leq0},\,\mathcal{C}^{\geq0})$ to denote them. We will see that $(\mathcal{C}^{\leq0},\,\mathcal{C}^{\geq0})$ is a t-structure on the dg category $\mathcal{C}$. Actually, condition (i) and (ii) in our definition are naturally satisfied. For condition (iii), because $\mathcal{K}$ is pretriangulated, every distinguished triangle is induced in that way. So (iii) holds. Hence, if $\mathcal{K}$ is dg enhanced, the t-structure on $\mathcal{K}$ always induces a t-structure on its dg enhancement.

The following proposition is a standard result for t-structures.
\begin{prop}For any $X\in\mathcal{C}^{\leq n}$ (resp. $X\in\mathcal{C}^{\geq n})$, $Y\in\mathcal{C}$ and $k\in\mathbb{Z}$, there is a $Y_0\in\mathcal{C}^{\leq n+k}$ (resp. $Y_1\in\mathcal{C}^{\geq n+k}$), such that $$\h^k(X,\,Y_0)\tilde{\longrightarrow}\h^k(X,Y),$$
$$\big(\text{resp.}\quad\h^k(Y_1,\,X)\tilde{\longrightarrow}\h^k(Y,X).\big)$$\end{prop}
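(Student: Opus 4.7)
The plan is to mimic the classical t-structure truncation argument, using axiom (iii) of Definition 3.1 to produce the appropriate triangle and axiom (ii) to kill the obstruction terms in the resulting long exact sequence.

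For the covariant claim (the case $X\in\mathcal{C}^{\leq n}$), I would first apply axiom (iii) to the shifted object $Y[-(n+k)]$ to obtain a triangle $A\to Y[-(n+k)]\to B\to A[1]$ with $A\in\mathcal{C}^{\leq 0}$ and $B\in\mathcal{C}^{\geq 1}$, quasi-equivalent to a mapping-cone triangle. Shifting by $[n+k]$ and relabelling gives
$$Y_0\longrightarrow Y\longrightarrow Y_1\longrightarrow Y_0[1]$$
with $Y_0\in\mathcal{C}^{\leq n+k}$ and $Y_1\in\mathcal{C}^{\geq n+k+1}$, still quasi-equivalent to a cone triangle and hence distinguished once one passes to $H^0(\mathcal{C})$. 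Applying the cohomological functor $\Hom_{H^0(\mathcal{C})}(X,-)$ then produces a long exact sequence
$$\cdots\longrightarrow\h^{k-1}(X,Y_1)\longrightarrow\h^k(X,Y_0)\longrightarrow\h^k(X,Y)\longrightarrow\h^k(X,Y_1)\longrightarrow\cdots$$
For the middle map to be an isomorphism it suffices that $\h^{k-1}(X,Y_1)=0=\h^k(X,Y_1)$. Writing $\h^j(X,Y_1)=\Hom(X,Y_1[j])$ and noting $Y_1[j]\in\mathcal{C}^{\geq n+k+1-j}$, for $j\in\{k-1,k\}$ we have $Y_1[j]\in\mathcal{C}^{\geq n+1}$; combined with $X\in\mathcal{C}^{\leq n}$, axiom (ii) forces the required vanishing.

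The contravariant claim is handled by the dual argument: apply (iii) at the appropriate level to obtain a triangle $Y_0\to Y\to Y_1\to Y_0[1]$ with $Y_1\in\mathcal{C}^{\geq n+k}$ and $Y_0$ sitting strictly below in the t-structure, feed it through $\Hom(-,X)$, and use (ii) to kill the two $\h^{*}(Y_0,X)$ terms that obstruct the isomorphism $\h^k(Y_1,X)\tilde{\to}\h^k(Y,X)$.

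The only real technical point is the first step: extracting a truncation triangle with the precise bounds from the quasi-equivalent mapping-cone data produced by (iii), so that one may pass to $H^0(\mathcal{C})$ and invoke the usual long-exact-sequence machinery. That is exactly what the ``quasi-equivalent to a cone triangle'' clause in axiom (iii) was built to allow; once it is in place, the rest is a routine t-structure computation, and I do not anticipate any further obstacle.
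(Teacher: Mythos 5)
Your covariant argument is, in substance, the same as the paper's: the paper first invokes the standard adjunctions $\h^0(X,\tau^{\leq n+k}Y)\cong\h^0(X,Y)$ for $X\in\mathcal{C}^{\leq n+k}$, then reduces $\h^k$ to $\h^0$ via the shift $X\mapsto X[-k]$; you instead unpack the adjunction into the long exact sequence of the truncation triangle and kill the two obstruction terms with axiom (ii). These are two presentations of one computation, and both are fine. One small slip: you apply (iii) to $Y[-(n+k)]$ and then shift by $[n+k]$, but $\mathcal{C}^{\leq 0}[n+k]=\mathcal{C}^{\leq -(n+k)}$ under the paper's convention $\mathcal{C}^{\leq m}=\mathcal{C}^{\leq 0}[-m]$. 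You should apply (iii) to $Y[n+k]$ and shift by $[-(n+k)]$; the bounds $Y_0\in\mathcal{C}^{\leq n+k}$, $Y_1\in\mathcal{C}^{\geq n+k+1}$ you state are the correct ones, so the rest of the argument goes through unchanged.

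The real issue is the contravariant half, which you (like the paper) dispatch by appeal to duality. If you actually run the dual argument you describe -- take $Y_0\to Y\to Y_1\to Y_0[1]$ with $Y_1\in\mathcal{C}^{\geq n+k}$, $Y_0\in\mathcal{C}^{\leq n+k-1}$, and try to kill $\h^{k-1}(Y_0,X)$ and $\h^{k}(Y_0,X)$ -- you need $X[j]\in\mathcal{C}^{\geq n+k}$ for $j\in\{k-1,k\}$, but $X\in\mathcal{C}^{\geq n}$ only gives $X[j]\in\mathcal{C}^{\geq n-j}$, and $n-j\geq n+k$ fails for $k>0$. In fact the correct index is $\mathcal{C}^{\geq n-k}$, not $\mathcal{C}^{\geq n+k}$: writing $\h^k(Y,X)=\h^0(\Hom(Y[-k],X))$ and truncating $Y[-k]$ at level $n$ (the level matching $X\in\mathcal{C}^{\geq n}$) gives $Y_1=(\tau^{\geq n}(Y[-k]))[k]\in\mathcal{C}^{\geq n-k}$. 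This appears to be a typo in the proposition as stated; the paper's ``proven similarly'' does not catch it, and neither does your blind appeal to duality. You should either fix the index or at least note that the dual argument forces $n-k$ rather than $n+k$.
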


\begin{proof}First, we notice that if $(\mathcal{K}^{\leq0},\mathcal{K}^{\geq0})$ is a t-structure of a triangulated category $\mathcal{K}$, then for any $X\in\mathcal{K}$, there is an exact triangle $$X_0\rightarrow X[n]\rightarrow X_1\rightarrow X_0[1]$$ with $X_0\in\mathcal{K}^{\leq0}$ and $X_1\in\mathcal{K}^{\geq1}$. Then, we have $$X_0[-n]\rightarrow X\rightarrow X_1[-n]\rightarrow X_0[-n+1]$$ with $X_0[-n]\in\mathcal{K}^{\leq0}[-n]=\mathcal{K}^{\leq n}$ and $X_1[-n]\in\mathcal{K}^{\geq1}[-n]=\mathcal{K}^{\geq n+1}$. Therefore, $(\mathcal{K}^{\leq n},\mathcal{K}^{\geq n})$ is also a t-structure for $\mathcal{K}$. Hence the standard argument gives the adjoint functors $\tau^{\leq n},\,\tau^{\geq n}$:
$$\Hom_{\mathcal{K}^{\leq n}}(X,\,\tau^{\leq n}Y)\tilde{\longrightarrow}\Hom_{\mathcal{K}}(X,Y),\,\text{with}\,X\in\text{Ob}(\mathcal{K}^{\leq n}),\,Y\in\text{Ob}(\mathcal{K})$$ and
$$\Hom_{\mathcal{K}^{\geq n}}(\tau^{\geq n}X,\,Y)\tilde{\longrightarrow}\Hom_{\mathcal{K}}(X,Y)\,\text{with}\,X\in\text{Ob}(\mathcal{K}),\,Y\in\text{Ob}(\mathcal{K}^{\geq n}).$$

Now let $X$ be an object of $\mathcal{C}^{\leq n}$. Then $\h^k(X,Y)=\h^0(\Hom(X[-k],Y)),$ with $X[-k]$ is in $\mathcal{C}^{\leq n+k}$. Thus, if we set $Y_0=\tau^{\leq n+k}Y$, we have $$\Hom(X[-k],\,Y_0)\tilde{\longrightarrow}\Hom(X[-k],Y).$$ Moreover, we have $$\h^k(X,Y_0)\tilde{\longrightarrow}\h^k(X,Y).$$

The other statement is proven similarly.
\end{proof}

\subsection{Decomposition of motivic Galois groups}
Let $\mathcal{D}$ be a triangulated category which is dg enhanced and idempotent complete. Thus, there is a dg pre-triangulated category $\mathcal{A}$ such that $H^0(\mathcal{A})=\mathcal{D}$. As To\"{e}n and Vaqui\'{e} showed in \cite{TV}, there is an equivalence $\mathcal{D}\rightarrow \mathcal{D}_c(\mathcal{A})$.

\begin{thm}If $(\mathcal{D}^{\leq0},\,\mathcal{D}^{\geq0})$ is a t-structure on $\mathcal{D}$, then it also gives a (weak-)t-structure on $\mathcal{A}$, which may be denoted by $(\mathcal{D}^{\leq0}_{dg},\mathcal{D}^{\geq0}_{dg})$. Then for any additive invariant $F:\mathcal{A}\rightarrow\mathcal{B}$, we have $F(\mathcal{D}^{\leq0}_{dg})\oplus F(\mathcal{D}^{\geq1}_{dg})=F(\mathcal{A})$.\end{thm}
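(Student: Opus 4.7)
The plan is to handle the two assertions of the theorem in sequence: first, verify that $(\mathcal{D}^{\leq 0}_{dg},\mathcal{D}^{\geq 0}_{dg})$ satisfies Definition 4.1 on $\mathcal{A}$, and then extract the splitting of $F$ from condition (iii) of the weak t-structure together with the hypothesis that $F$ factors through $\mathrm{Hmo}_0$.

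For the first assertion I would essentially reprise the discussion given immediately after Definition 4.1. Since $\mathcal{D}^{\leq 0}_{dg}$ and $\mathcal{D}^{\geq 0}_{dg}$ are the full subcategories of $\mathcal{A}$ spanned by the objects of $\mathcal{D}^{\leq 0}$ and $\mathcal{D}^{\geq 0}$ inside $H^0(\mathcal{A})=\mathcal{D}$, conditions (i) and (ii) of Definition 4.1 translate directly from the given t-structure on $\mathcal{D}$. For condition (iii) the crucial input is that $\mathcal{A}$ is pretriangulated: every distinguished triangle in $H^0(\mathcal{A})$ is isomorphic to the image of a mapping-cone triangle of a closed degree-$0$ morphism in $\mathcal{A}$. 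Applying this to the truncation triangle produced by the t-structure on $\mathcal{D}$ yields exactly the form demanded by (iii).

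For the second assertion, condition (iii) yields, for every $X\in\mathcal{A}$, a distinguished triangle $X_0\to X\to X_1\to X_0[1]$ with $X_0\in\mathcal{D}^{\leq 0}_{dg}$, $X_1\in\mathcal{D}^{\geq 1}_{dg}$, quasi-equivalent to a mapping cone on some closed morphism. I would then argue, in the spirit of Keller and Tabuada, that this together with the Hom-vanishing of condition (ii) yields a semi-orthogonal-type decomposition at the level of $\mathrm{Hmo}_0$: the bimodule classes associated to the inclusions $\mathcal{D}^{\leq 0}_{dg}\hookrightarrow\mathcal{A}$ and $\mathcal{D}^{\geq 1}_{dg}\hookrightarrow\mathcal{A}$ assemble, inside the Grothendieck group of $\mathrm{rep}$-bimodules, into an isomorphism identifying $\mathcal{A}$ with $\mathcal{D}^{\leq 0}_{dg}\oplus\mathcal{D}^{\geq 1}_{dg}$ in $\mathrm{Hmo}_0$, with condition (ii) preventing any cross term in the composition. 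Since $F$ factors through $\mathrm{Hmo}_0$, applying $F$ gives $F(\mathcal{A})=F(\mathcal{D}^{\leq 0}_{dg})\oplus F(\mathcal{D}^{\geq 1}_{dg})$.

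The principal obstacle I anticipate is that the subcategories $\mathcal{D}^{\leq 0}_{dg}$ and $\mathcal{D}^{\geq 1}_{dg}$ are \emph{not} closed under the shift functor of $\mathcal{A}$, in contrast with the components of a classical semi-orthogonal decomposition. Consequently the standard additive-invariant splitting theorems do not apply verbatim and the argument must be carried out directly from the mapping-cone presentation supplied by (iii). The delicate point will be to verify that this presentation, with only a weak (quasi-equivalent) lifting of truncation functors, still suffices to produce a genuine decomposition of $\mathcal{A}$ in $\mathrm{Hmo}_0$, rather than only in some derived or Karoubi closure.
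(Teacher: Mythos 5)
Your route is the same as the paper's: both reduce the splitting of $F$ to a direct--sum decomposition in $\mathrm{Hmo}_0$, i.e.\ to the claim that the inclusions induce $K_0\mathrm{rep}(\mathcal{U},\mathcal{D}^{\leq0}_{dg})\oplus K_0\mathrm{rep}(\mathcal{U},\mathcal{D}^{\geq1}_{dg})=K_0\mathrm{rep}(\mathcal{U},\mathcal{A})$ for every $\mathcal{U}$, and both obtain the ``generation'' half from the truncation triangle supplied by condition (iii), which gives $[X]=[X_0]+[X_1]$ in the Grothendieck group. Your first part (transporting the t-structure to the enhancement) is exactly the remark following Definition 4.1 and is unproblematic.

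The step you yourself flag as delicate is, however, a genuine gap, and the paper's own proof does not fill it either: it proves generation and is silent on directness. Condition (ii) only kills $H^0$ of the Hom complex from $\mathcal{D}^{\leq0}_{dg}$ to $\mathcal{D}^{\geq1}_{dg}$; it says nothing about $H^n$ for $n\neq0$, so the ``cross term'' $K_0\mathrm{rep}(\mathcal{D}^{\leq0}_{dg},\mathcal{D}^{\geq1}_{dg})$ need not vanish. Worse, since the two aisles are not shift-closed, passing to $\mathcal{D}_c(-)$ (equivalently to $\mathrm{rep}(\mathcal{U},-)$, which is what $\mathrm{Hmo}_0$ sees) closes each of them under shifts and cones; for a bounded t-structure each aisle contains the heart up to shift and hence thickly generates all of $\mathcal{D}$, so both inclusions become Morita equivalences. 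Concretely, for $\mathcal{D}=D^b(k)$ with the standard t-structure and $F=K_0$ one finds $K_0(\mathcal{D}^{\leq0}_{dg})\cong K_0(\mathcal{D}^{\geq1}_{dg})\cong K_0(\mathcal{A})\cong\mathbb{Z}$ with both induced maps surjective, so the images generate but the sum is not direct. Hence the ``semi-orthogonal-type decomposition with no cross term in $\mathrm{Hmo}_0$'' cannot be established from (ii) and (iii) as stated; one would need either a reformulation (a filtration rather than a direct sum, or a restriction on the invariants considered) or extra hypotheses. A further technical point neither you nor the paper addresses: for general $\mathcal{U}$ one must truncate $\mathcal{U}$--$\mathcal{A}$-bimodules functorially, not merely objects of $H^0(\mathcal{A})$.
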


\begin{proof}It is sufficient to show that, for any $\mathcal{U}\in \mathrm{dgcat}_k$,$$\mathrm{K_0rep}(\mathcal{U},\mathcal{D}^{\leq0}_{dg})\oplus \mathrm{K_0rep}(\mathcal{U},\mathcal{D}^{\geq1}_{dg})=\mathrm{K_0rep}(\mathcal{U},\mathcal{A}).$$
Recall $\mathrm{rep}(\mathcal{U},\mathcal{A})\subset\mathcal{D}(\mathcal{U}^{op}\otimes\mathcal{A})$ is the sub-triangulated category of bimodules $X$ such that for any $x\in\mathcal{U}$, $X(\,\,,x)\in\mathcal{D}_c(\mathcal{A})$.

This theorem is proven if we notice the fact that $H^0(\mathcal{A})\rightarrow\mathcal{D}_c(\mathcal{A})$ is an equivalence.  For any $X\in\mathrm{Ob}(H^0(\mathcal{A}))$, there is an exact triangle $$X_0\rightarrow X\rightarrow X_1\rightarrow X_0[1]$$ in $H^0(\mathcal{A})$ with $X_0$ in $\mathcal{D}^{\leq0}=H^0(\mathcal{D}^{\leq0}_{dg})\hookrightarrow H^0(\mathcal{A})$ and $X_1$ in $\mathcal{D}^{\geq1}=H^0(\mathcal{D}^{\geq1}_{dg})\hookrightarrow H^0(\mathcal{A})$. By the equivalence, this gives an exact triangle in $\mathcal{D}_c(\mathcal{A})$, hence in $\mathrm{rep}(\mathcal{U},\mathcal{A})$. After passing to the Grothendieck group, the exact triangle becomes the desired direct sum decomposition.\end{proof}

It is known \cite{Ke1}, \cite{CT}, the periodic cyclic homology $HP: \mathrm{NChow}\rightarrow \mathrm{sVect}(k)$ is an additive invariant. Also, Marcolli and Tabuada showed in \cite{MT2}, the category $\mathrm{NChow}$ is super-Tannakinan with the super-fibre functor $HP$.

Let $X$ be the elliptic curve above and $D^b_{dg}(X)$ the object associated to $X$ in NChow. Actually, by the Hodge conjecture, the Tannakian subcategory $M_X$ generated by $D^b_{dg}(X)$ in NChow is Tannakian with the fibre functor $HP$ \cite{MT2} and the motivic Galois group $Gal(D^b_{dg}(X))$. The t-structure $(\mathbf{D}^{\leq0}_{dg},\,\mathbf{D}^{\geq0}_{dg})$ on $D^b_{dg}(X)$ gives the direct sum $$HP(\mathbf{D}^{\leq0}_{dg})\oplus HP(\mathbf{D}^{\geq1}_{dg})=HP(D^b_{dg}(X)).$$ We define $HP^{\leq0}$ as the composition $D^b_{dg}(X)\rightarrow HP(D^b_{dg}(X))\rightarrow HP(\mathbf{D}^{\leq0}_{dg})$ and $HP^{\geq0}$ as the composition $D^b_{dg}(X)\rightarrow HP(D^b_{dg}(X))\rightarrow HP(\mathbf{D}^{\geq0}_{dg})$. Moreover, $HP^{\leq0}$ and $HP^{\geq0}$ induce two subgroups $G^{\leq0}$, $G^{\geq0}$ of $Gal(D^b_{dg}(X))$.

\begin{cor}There are two subgroups $G^{\leq0}$, $G^{\geq0}$ of $Gal(D^b_{dg}(X))$ associated to the t-structure on $D^b_{dg}(X)$ and $G^{\leq0}\oplus G^{\geq0} \subset Gal(D^b_{dg}(X))$\end{cor}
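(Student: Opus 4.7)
The plan is to deduce the corollary from Theorem 4.3 together with the Tannakian formalism on $M_X$. First I would apply Theorem 4.3 to the additive invariant $HP$, which yields the direct sum $HP(D^b_{dg}(X)) = HP(\mathbf{D}^{\leq 0}_{dg}) \oplus HP(\mathbf{D}^{\geq 1}_{dg})$. Unwinding the proof of that theorem, the decomposition already holds functorially at the level of $\mathrm{K_0rep}(-, D^b_{dg}(X))$; by the Yoneda principle in $\mathrm{Hmo}_0$, together with the pseudo-abelian completeness built into $\mathrm{NChow}$, I would promote this to a pair of orthogonal idempotents $p^{\leq 0}, p^{\geq 1} \in \mathrm{End}_{\mathrm{NChow}}(D^b_{dg}(X))$ summing to the identity, whose images under $HP$ are exactly the projections onto the two summands.

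The second step is to invoke naturality of the Galois action. For any $R$-point $\eta$ of $Gal(D^b_{dg}(X))$, that is, a tensor natural automorphism of $HP \otimes R$ on $M_X$, naturality at $p^{\leq 0}$ and $p^{\geq 1}$ forces $\eta_{D^b_{dg}(X)}$ to commute with $HP(p^{\leq 0})$ and $HP(p^{\geq 1})$. Hence $\eta_{D^b_{dg}(X)}$ preserves the decomposition and restricts to an automorphism of each summand. I would then define $G^{\leq 0}$ (respectively $G^{\geq 0}$) as the closed algebraic subgroup of those $\eta$ whose restriction to $HP(\mathbf{D}^{\geq 1}_{dg})$ (respectively $HP(\mathbf{D}^{\leq 0}_{dg})$) is the identity; these are exactly the subgroups of $Gal(D^b_{dg}(X))$ cut out by the factors $HP^{\leq 0}$ and $HP^{\geq 0}$ introduced above.

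It then remains to verify the direct sum inclusion, which reduces to three routine assertions. The two subgroups commute, since their elements act on complementary direct summands of $HP(D^b_{dg}(X))$. Their intersection is trivial: any $\eta$ lying in both acts as the identity on $HP(D^b_{dg}(X))$, and because $D^b_{dg}(X)$ tensor-generates the Tannakian subcategory $M_X$, such an $\eta$ is the identity of $Gal(D^b_{dg}(X))$. Together these give an injective homomorphism of algebraic groups $G^{\leq 0} \times G^{\geq 0} \hookrightarrow Gal(D^b_{dg}(X))$, which is precisely the claimed internal direct sum.

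The step I expect to be the main obstacle is the very first one: promoting the additive-invariant-level splitting furnished by Theorem 4.3 into genuine orthogonal idempotents living in $\mathrm{End}_{M_X}(D^b_{dg}(X))$. Without such idempotents, the Galois group has no a priori reason to preserve the $HP$-decomposition and the proposed $G^{\leq 0}$, $G^{\geq 0}$ need not even be algebraic subgroups. I would address this by carefully tracking the $\mathrm{K_0rep}$-level decomposition in the proof of Theorem 4.3 through the canonical functor $\mathrm{Hmo}_0 \to \mathrm{NChow}$, using that $\mathrm{NChow}$ is defined as a pseudo-abelian hull so that every idempotent in $\mathrm{Hmo}_0$ splits; once the idempotents are secured, the rest of the argument is formal Tannakian duality.
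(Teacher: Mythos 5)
The paper states this corollary without a proof; the construction of $G^{\leq0}$, $G^{\geq0}$ rests entirely on the preceding assertion that $HP^{\leq0}$ and $HP^{\geq0}$ ``induce two subgroups,'' with no mechanism supplied. Your plan is the natural way to try to make this precise, and you have put your finger on exactly the right obstacle: one needs orthogonal idempotents $p^{\leq0},p^{\geq1}$ living in $\mathrm{End}_{\mathrm{NChow}}(D^b_{dg}(X))$, i.e.\ inside the Tannakian subcategory $M_X$, before Tannakian naturality can force an $R$-point $\eta$ of the Galois group to respect the splitting of $HP(D^b_{dg}(X))$. Without them, the preservation step in your second paragraph does not go through, and the sets $G^{\leq0}$, $G^{\geq0}$ you write down are not visibly closed algebraic subgroups.

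The gap is that the fix you sketch does not close. Theorem 4.3 and its proof only give, for each fixed $\mathcal{U}$, a relation in the Grothendieck group coming from truncation triangles $X_0\to X\to X_1$; this shows at most that the map $\mathrm{K_0rep}(\mathcal{U},\mathbf{D}^{\leq0}_{dg})\oplus\mathrm{K_0rep}(\mathcal{U},\mathbf{D}^{\geq1}_{dg})\to\mathrm{K_0rep}(\mathcal{U},D^b_{dg}(X))$ is surjective, not that it is an isomorphism, and it establishes no naturality in $\mathcal{U}$ (the truncation is taken object-by-object on the values $X(\text{--},U)\in\mathcal{D}_c(\mathcal{A})$, and it is not shown that this assembles into a triangle of $\mathcal{U}$--$\mathcal{A}$-bimodules, let alone functorially). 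Consequently one cannot invoke Yoneda in $\mathrm{Hmo}_0$ to produce endomorphisms of $D^b_{dg}(X)$: to get $p^{\leq0},p^{\geq1}$ you would need the t-structure truncation functors $\tau^{\leq0},\tau^{\geq1}$ to be represented by correspondences in $\mathrm{K}_0\mathrm{rep}(D^b_{dg}(X),D^b_{dg}(X))_F$, which is a genuinely new claim (t-structures, unlike semiorthogonal decompositions, do not come with such kernel representability, since the aisle and coaisle are not triangulated). A further point worth flagging: $\mathbf{D}^{\leq0}_{dg}$ and $\mathbf{D}^{\geq1}_{dg}$ are merely objects of $\mathrm{Hmo}_k$, not of $\mathrm{NChow}$ (they are not pretriangulated, hence not smooth and proper), so they do not themselves lie in $M_X$; the only way their $HP$-summands can be seen by the motivic Galois group is through idempotents on $D^b_{dg}(X)$, which brings you back to the unresolved point. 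Until those idempotents are produced, neither your argument nor the paper's terse assertion establishes the corollary.
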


% ----------------------------------------------------------------
%\bibliographystyle{amsplain}%
%\bibliography{}%

\end{document}